\documentclass[10pt]{amsart}

\usepackage{amsmath,xspace,amssymb,mathrsfs}
\usepackage{color}

\input xy
\xyoption{all}
\xyoption{2cell}
\UseAllTwocells
\CompileMatrices

\renewcommand{\phi}{\varphi}

\newcommand{\Ker}{\operatorname{Ker}}

\newcommand{\pdim}{\operatorname{pdim}}

\newtheorem{proposition}{Proposition}[section]
\newtheorem{lemma}[proposition]{Lemma}

\newtheorem{corollary}[proposition]{Corollary}
\newtheorem{theorem}[proposition]{Theorem}

\newtheorem{conjecture}[proposition]{Conjecture}

\theoremstyle{definition}

\newtheorem{remark}[proposition]{Remark}

\usepackage{etoolbox}
\makeatletter
\patchcmd{\@settitle}{\uppercasenonmath\@title}{}{}{}
\patchcmd{\@setauthors}{\MakeUppercase}{}{}{}
\makeatother

\begin{document}

\title{A counterexample to a question on the maximality of purely-primes}

\author[A. Tarizadeh]{Abolfazl Tarizadeh}
\address{Department of Mathematics, Faculty of Basic Sciences, University of Maragheh, Maragheh, East Azerbaijan Province, Iran.}
\email{ebulfez1978@gmail.com}

\date{}
\subjclass[2020]{13C11, 13A15}
\keywords{pure ideal; purely-prime; purely-maximal}

\begin{abstract} By using an advanced model of AI, it is proved that there exists a commutative ring having a purely-prime ideal which is not purely-maximal. This gives a counterexample to \cite[Conjecture 5.8]{Tarizadeh}.
\end{abstract}

\maketitle

\section{Introduction}

In \cite[Chap. 7, Example 36]{Borceux}, a noncommutative ring is constructed that has a two-sided maximal ideal whose pure part is not a purely-maximal ideal. Consequently, there exist noncommutative rings possessing purely-prime ideals that are not purely-maximal.

However, in the commutative case, the construction of such an example is by no means straightforward. Indeed, every purely-maximal ideal is purely-prime. The converse holds for several important classes of commutative rings. For instance, every purely-prime ideal of a Gelfand ring is purely-maximal \cite[Chap. 8, Proposition 37]{Borceux}. The same conclusion holds for reduced mp-rings; see \cite[Theorem 3.5]{Al-Ezeh} and \cite[Theorem 5.5]{Tarizadeh}. Moreover, every purely-prime ideal of a Noetherian ring is purely-maximal (Corollary \ref{Coro 1}). The same result holds for von-Neumann regular rings and even more generally for zero-dimensional rings (Lemma \ref{Lemma 1}).

In view of the preceding results, constructing a commutative ring having a purely-prime ideal that is not purely-maximal appears to be a very difficult challenging problem. This naturally led us to formulate the question as a conjecture in  \cite[Conjecture 5.8]{Tarizadeh}.  But the problem remained unresolved for several years. More recently, with the advent of advanced artificial intelligence, a counterexample to this conjecture was discovered with the assistance of the AI model ChatGPT (Theorem \ref{Theorem 1}). As anticipated in \cite{Tarizadeh}, a counterexample to the conjecture was expected to exist. Remarkably, the counterexample obtained here is very natural in the construction and at the same time the argument presented to prove the claim is very far-reaching. 

As another main result, we obtain a precise characterization of all pure ideals, and in particular of all purely-prime (purely-maximal) ideals, in every Noetherian ring and, more generally, in every ring in which each purely-maximal ideal is finitely generated (Lemma \ref{Theorem 2} and Corollary \ref{Coro 1}). 

Finally, we introduce the notion of pure dimension and formulate two related questions.

\section{Main Results}

Let $I$ be an ideal of a commutative ring $R$. We say that $I$ is a pure ideal of $R$ if $R/I$ is a flat $R$-module (i.e., the natural ring map $R\rightarrow R/I$ is flat), or equivalently, for each $a \in I$ there exists some $b \in I$ such that $a(1-b)=0$. The zero ideal and the whole ring are always pure ideals.   

\begin{remark}\label{Remark 1} If an ideal $I=(a_{i} : i \in S)$ of a ring $R$ is generated by the elements $a_{i}$ such that for each $i$ there exists some $b_i \in I$ with $a_{i}(1-b_{i})=0$ then $I$ is a pure ideal. Indeed, each $a\in I$ can be written as $a=\sum\limits_{d=1}^{m}r_{d}a_{i_{d}}$. Then $b:=1-\prod\limits_{s=1}^{m}(1-b_{i_{s}})\in I$ and $a(1-b)=0$. Hence, $I$ is pure. 
\end{remark}

The extension of a pure ideal under any ring map is a pure ideal. This follows from the fact that a flat ring map is stable under any base change. As an alternative proof, it also follows from Remark \ref{Remark 1}.

In an integral domain, the only pure ideals are the zero ideal and the whole ring. The same statement holds for local rings.   

If $I$ is a pure ideal of a ring $R$, then for any ideal $J$ of $R$ we have $IJ=I\cap J$. In particular, every pure ideal is idempotent, i.e., $I=I^2$.

By purely-prime ideal of $R$ we mean a proper pure ideal $I$ of $R$ such that if $J$ and $J'$ are pure ideals of $R$ with $JJ'\subseteq I$ then $J\subseteq I$ or $J'\subseteq I$. Similarly, by a purely-maximal ideal of $R$ we mean a proper pure ideal $I$ of $R$ such that it is a maximal element in the set of proper pure ideals of $R$. Using Zorn's lemma, it can be easily seen that every proper pure ideal is contained in a purely-maximal ideal. 

By Remark \ref{Remark 1}, every ideal that is generated by a set of idempotent elements is a pure ideal. Every finitely generated pure ideal is also generated by an idempotent element. In fact, every finitely generated idempotent ideal is generated by an idempotent element (apply a Nakayama type argument).  

\begin{lemma}\label{Lemma 1} Every purely-prime ideal of a zero-dimensional ring is purely-maximal.
\end{lemma}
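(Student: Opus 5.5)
The plan is to reduce the statement to the Gelfand case quoted in the introduction, namely \cite[Chap. 8, Proposition 37]{Borceux}: every purely-prime ideal of a Gelfand ring is purely-maximal. It then suffices to show that every zero-dimensional commutative ring $R$ is a Gelfand ring. Concretely, for $a,b\in R$ with $a+b=1$ we need $r,s\in R$ with $(1+ra)(1+sb)=0$. By the De Marco--Orsatti characterization, a commutative ring is Gelfand precisely when each prime ideal lies in a unique maximal ideal (a pm-ring). In a zero-dimensional ring every prime ideal is maximal, so each prime lies in exactly one maximal ideal, namely itself. Hence $R$ is a pm-ring and the cited proposition applies.

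Because the paper may not want to rely on the De Marco--Orsatti equivalence, I would also verify the Gelfand condition directly. Set $J=\operatorname{Ann}(a)$ and $J'=\operatorname{Ann}(b)$.
\begin{itemize}
\item We have $V(a)\cap V(b)=\emptyset$, since $a+b=1$.
\item In a zero-dimensional ring, for each prime $\mathfrak p$ the localization $R_{\mathfrak p}$ is local of dimension zero. So each element of $\mathfrak pR_{\mathfrak p}$ is nilpotent and each element outside $\mathfrak p$ is a unit.
\item If $a\in\mathfrak p$, then $b\notin\mathfrak p$, and $a$ is nilpotent in $R_{\mathfrak p}$. Hence $a^n t=0$ for some $n\geq 1$ and some $t\notin\mathfrak p$.
\item This shows that $\operatorname{Ann}(a^n)$ and $\operatorname{Ann}(b^m)$, over suitable exponents, together are not contained in any prime. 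By quasi-compactness, $\operatorname{Ann}(a^n)+\operatorname{Ann}(b^n)=R$ for a single $n$.
\item Write $1=u+v$ with $ua^n=0$ and $vb^n=0$. Since $a+b=1$, we have $a^n\equiv 1 \pmod b$, so $a^n=1+sb$ for some $s$, and similarly $b^n=1+ra$ for some $r$.
\end{itemize}
The last step is to combine these into the required product $(1+ra)(1+sb)=0$. I would first try $(1-u)$ and $(1-v)$, or rescaled versions of $u$ and $v$, which should give the product identity after clearing with the annihilation relations.

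The main obstacle is this final manipulation: producing $r,s$ with $(1+ra)(1+sb)=0$ exactly, rather than only up to nilpotents. If it proves awkward, I would fall back on the direct route, which avoids the Gelfand property altogether. For a prime $\mathfrak m$, let $O(\mathfrak m)$ be the kernel of $R\to R_{\mathfrak m}$.
\begin{itemize}
\item In a pm-ring each $O(\mathfrak m)$ is pure, and the purely-maximal ideals are exactly these $O(\mathfrak m)$.
\item Let $I$ be purely-prime. If $I\not\subseteq O(\mathfrak m)$ for two distinct maximal ideals, then Remark \ref{Remark 1}-type constructions give pure ideals $J,J'$ with $JJ'\subseteq I$ but $J,J'\not\subseteq I$, contradicting primeness.
\item Hence $I\subseteq O(\mathfrak m)$ for some $\mathfrak m$, and one then shows equality via $IJ=I\cap J$.
\end{itemize}
Either way, the essential input is that zero-dimensionality forces the pm (Gelfand) property.
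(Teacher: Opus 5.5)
Your main line is correct, and it takes a different route from the paper. You observe that a zero-dimensional ring is trivially a pm-ring, since every prime is maximal, hence a Gelfand ring. You then quote the Gelfand-ring theorem (Borceux, Chap.~8, Prop.~37), which the paper already cites in its introduction. This exhibits the lemma as a special case of that result.

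The paper instead argues from scratch. Using $g^n=g^{n+1}h$, it shows that every pure ideal $I$ of a zero-dimensional ring is generated by idempotents: if $f=fg$ with $g\in I$, then $e=g^nh^n\in I$ is idempotent and $f=fe$. Now let $I$ be purely-prime and contained in a purely-maximal $J$. Each idempotent $e\in J$ satisfies $Re\cdot R(1-e)=0\subseteq I$, while $1-e\notin I$ (else $1\in J$). Hence $e\in I$, and so $I=J$.

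Your route is shorter but delegates all the work to external theorems: pm $\Rightarrow$ Gelfand, and Borceux's result. The paper's route is elementary and yields, as a by-product, the idempotent-generated structure of pure ideals in zero-dimensional rings.

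Your supplementary ``direct verification'' of the Gelfand condition should be discarded as written, because it contains a false step.
\begin{itemize}
\item The claim $\operatorname{Ann}(a^n)+\operatorname{Ann}(b^n)=R$ fails, for example, for $R=\mathbb{Q}$ and $a=b=1/2$, where both annihilators vanish.
\item Your local analysis handles primes containing $a$ or $b$, but not primes containing neither. At such primes neither annihilator need escape $\mathfrak p$.
\item With your $r,s$ one even gets $(1+ra)(1+sb)=b^na^n\neq 0$ in this example. So the trouble lies before the ``final manipulation'', not in it.
\end{itemize}
A correct direct check uses an idempotent. Write $a^n=a^{n+1}c$, so that $a^n=a^{2n}c^n$ and $e=a^nc^n$ is idempotent with $a^n(1-e)=0$. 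Take $r=-a^{n-1}c^n$ and $s=-(1+a+\cdots+a^{n-1})$. Then $1+ra=1-e$ and $1+sb=a^n$, and their product is $0$. This makes your reduction independent of the pm--Gelfand equivalence.

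Your fallback sketch, by contrast, is not a proof. For a proper pure ideal $I$, the inclusion $I\subseteq O(\mathfrak m)$ is automatic for every maximal $\mathfrak m\supseteq I$. The reverse inclusion is the whole content of the lemma, and the sketch leaves it unproved.
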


\begin{proof} We first show that every pure ideal $I$ of a zero-dimensional ring $R$ is generated by a set of idempotent elements. To see this, we use the following fact that a ring $R$ is zero-dimensional if and only if for each $a\in R$ there exist a (positive) natural number $n\geqslant1$ and some $b\in R$ such that $a^{n}(1-ab)=0$. Now take $f\in I$. Since $I$ is pure,  $f=fg$ for some $g\in I$. Then by applying the above fact for the element $g$, there exist a natural number $n\geqslant1$ and some $h\in R$ such that $g^{n}=g^{n+1}h$. It follows that $g^{n}=g^{2n}h^{n}$. This shows that $g^{n}h^{n}$ is idempotent. It is clear that $g^{n}h^{n}\in I$. Then from $f=fg$ we get that $f=fg^{n}=fg^{n}(g^{n}h^{n})$. This shows that $I$ is generated by a set of its idempotent elements. Now the assertion is clear. Indeed, if $I$ is a purely-prime ideal of $R$ then $I\subseteq J$ where $J$ is a purely-maximal ideal of $R$. By the above observation, $J$ is generated by a set of idempotent elements. For any idempotent $e\in J$ we have $Re R(1-e)=0\subseteq I$. Since $I$ is purely-prime, we get that $e\in I$. Therefore, $I=J$. 
\end{proof}

In particular, every-purely prime ideal of a Boolean ring and more generally every-purely prime ideal of a von-Neumann regular ring is purely-maximal. 

In a ring $R$, by primitive idempotent we mean a nonzero idempotent $e\in R$ such that if $ee'=e'$ for some nonzero idempotent $e'\in R$, then $e=e'$. We have then the following result:

\begin{lemma}\label{Theorem 2} Let $R$ be a ring with the property that every purely-maximal ideal is finitely generated. Then we have: \\
$\mathbf{(i)}$ Every pure ideal of $R$ is exactly of the form $Re$ where $e$ is an idempotent of $R$. In particular, every purely-prime ideal of $R$ is purely-maximal. \\
$\mathbf{(ii)}$ The purely-prime ideals of $R$ are exactly of the form $R(1-e)$ where $e$ is a primitive idempotent of $R$. 
\end{lemma}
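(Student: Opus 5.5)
The plan is to show first that $R$ splits as a finite product of rings, each having no nontrivial pure ideals. Both (i) and (ii) should then follow by bookkeeping over the factors.

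\textbf{Step 1: purely-maximal ideals.} Let $M$ be a purely-maximal ideal. It is finitely generated and pure, hence idempotent, so by the Nakayama-type remark before Lemma \ref{Lemma 1} we have $M=Re$ with $e$ idempotent. I claim $1-e$ is primitive. It is nonzero since $M$ is proper. Suppose $e'$ is a nonzero idempotent with $(1-e)e'=e'$. Then $Re+Re'$ is pure by Remark \ref{Remark 1} and contains $M$. By maximality either $Re+Re'=M$ or $Re+Re'=R$. In the first case $e'\in Re$, so $e'=e'e=0$, which is impossible. In the second case $1=re+se'$; multiplying by $1-e$ gives $1-e=s e'(1-e)=se'$, so $1-e\in Re'$, and hence $(1-e)e'=1-e$. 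Thus $e'=1-e$, as required.

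\textbf{Step 2: finite decomposition.} Two distinct primitive idempotents $u,u'$ are orthogonal. Indeed, $uu'$ is an idempotent with $u\cdot uu'=uu'$, so $uu'$ is $0$ or $u$, and symmetrically $0$ or $u'$; hence $uu'=0$ unless $u=u'$. Now let $J$ be the ideal generated by all primitive idempotents; it is pure by Remark \ref{Remark 1}. If $J$ were proper, it would lie in some purely-maximal $Re$. By Step 1, $1-e$ is primitive, so $1-e\in J\subseteq Re$ and therefore $1\in Re$, a contradiction. Hence $J=R$, so $1$ lies in the ideal generated by finitely many primitive idempotents $u_1,\dots,u_n$. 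Put $e=\sum u_i$. By orthogonality $e$ is idempotent and $u_i=eu_i$, so $1\in Re$ and $e=1$. This gives $R\cong\prod_i Ru_i$. The same orthogonality argument shows that $u_1,\dots,u_n$ are all the primitive idempotents of $R$.

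\textbf{Step 3: the factors and the proof of (i).} Fix $i$ and let $K$ be a proper pure ideal of the ring $Ru_i$. Then $K\oplus\bigoplus_{j\ne i}Ru_j$ is a proper pure ideal of $R$, so it lies in some purely-maximal $Rf$ with $f$ idempotent. Now $fu_i$ is an idempotent with $u_i\cdot fu_i=fu_i$, so $fu_i\in\{0,u_i\}$. It cannot equal $u_i$, because then $u_i\in Rf$ together with the other $u_j\in Rf$ would give $1\in Rf$. Hence $fu_i=0$ and $K\subseteq Rfu_i=0$. So each $Ru_i$ has only the trivial pure ideals. Now take any pure ideal $I$ of $R$. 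Then $I=\bigoplus_i Iu_i$, and each $Iu_i$ is a pure ideal of $Ru_i$ (it is an extension of a pure ideal), so it is $0$ or $Ru_i$. Therefore $I=Re$ with $e$ the sum of the $u_i$ for which $Iu_i\ne0$. Conversely every $Re$ is pure. Since the pure ideals of $R$ correspond to subsets of $\{1,\dots,n\}$, the purely-prime and the purely-maximal ideals are both exactly the $Re$ with $e$ the sum of all but one $u_i$; this is checked in Step 4. In particular every purely-prime ideal is purely-maximal.

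\textbf{Step 4: proof of (ii).} Write each pure ideal as $I_S=\sum_{i\in S}Ru_i$ for a subset $S\subseteq\{1,\dots,n\}$. Products correspond to intersections, $I_SI_T=I_{S\cap T}$, and $I_S\subseteq I_T$ iff $S\subseteq T$. For proper $I_S$, the purely-prime condition says that $S\cap T\subseteq S$ forces $S'\subseteq S$ or $T\subseteq S$ whenever $S'\cap T\subseteq S$; this holds exactly when the complement of $S$ is a singleton $\{i\}$. In that case $I_S=R(1-u_i)$. Since the $u_i$ are precisely the primitive idempotents, this proves (ii).

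The main obstacle is Step 2, namely getting finiteness from the hypothesis. The trick of generating a pure ideal by all primitive idempotents and invoking Step 1 is what I expect to make it work.
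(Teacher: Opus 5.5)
Your proof is correct, and it takes a genuinely different route from the paper's.

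The paper proves (i) by quoting an external result (Theorem 6.2 of the author's earlier paper: via Zorn's lemma, if every purely-maximal ideal is finitely generated then every pure ideal is). It then applies the Nakayama-type remark. Part (ii) is obtained from (i) by direct idempotent manipulations. The clause ``every purely-prime ideal is purely-maximal'' is left implicit; it follows as at the end of the proof of Lemma \ref{Lemma 1}.

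You avoid the citation entirely. Step 1 identifies the purely-maximal ideals as the $R(1-u)$ with $u$ primitive. The key idea of Step 2 is that the pure ideal generated by all primitive idempotents cannot lie in any such $R(1-u)$, hence equals $R$. This yields a finite complete set of orthogonal primitive idempotents, so $R\cong\prod_{i=1}^n Ru_i$ with each factor having only trivial pure ideals (and only trivial idempotents).

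Your route buys a stronger structural statement. Under the lemma's hypothesis alone, $R$ has exactly $2^n$ idempotents and $2^n$ pure ideals. So the finiteness claim of Corollary \ref{Coro 1}(i) holds in this generality, with no appeal to minimal primes. Both the ``in particular'' clause and (ii) then reduce to a fact about the Boolean lattice of subsets of $\{1,\dots,n\}$: its prime proper elements and its maximal proper elements are both exactly the complements of singletons. The paper's route is shorter given the cited theorem, and its proof of (ii) works directly with primitivity, without any product decomposition.

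One small slip in Step 4: ``$S\cap T\subseteq S$ forces'' should read ``$S'\cap T\subseteq S$ forces''. You should also write out the one-line check. If the complement of $S$ contains $i\neq j$, take $S'=\{i\}$ and $T=\{j\}$. If the complement is $\{i\}$, then $i\notin S'\cap T$ gives $S'\subseteq S$ or $T\subseteq S$.
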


\begin{proof} (i): By Zonrn's lemma, every pure ideal of $R$ is finitely generated \cite[Theorem 6.2]{Tarizadeh}. Since every pure ideal is idempotent, a Nakayama type argument shows that every pure ideal of $R$ is exactly of the form $Re$ where $e$ is an idempotent of $R$. \\
(ii): Let $I$ be a purely-prime ideal of $R$. Then by (i), $I=Rf$ for some idempotent $f\in R$. We show that $e:=1-f$ is a primitive idempotent. Since $I\neq R$, the idempotent $e$ is nonzero. Suppose there exists a nonzero idempotent $e'\in R$ such that $ee'=e'$ but $e\neq e'$. Then $e'':=e-e'$ is also a nonzero idempotent. We have $e'e''=0$. Then $(Re')(Re'')=Re'e''=0\subseteq R(1-e)=Rf$. Since $I=Rf$ is purely-prime, $Re'\subseteq R(1-e)$ or $Re''\subseteq R(1-e)$. If $Re'\subseteq R(1-e)$ then $e'=e'(1-e)=0$ which is a contradiction. If $Re''\subseteq R(1-e)$ then $e''=0$ which is again a contradiction. Therefore, $e$ is a primitive idempotent of $R$. \\
Conversely, let $e$ be a primitive idempotent of $R$. We show that $R(1-e)$ is a purely-prime ideal of $R$. We have $R(1-e)\neq R$, because $e\neq0$. Thus $R(1-e)$ is a proper pure ideal of $R$. Let $I$ and $J$ be pure ideals of $R$ with $IJ\subseteq R(1-e)$. By (i), there are idempotents $f, g \in R$ such that $I=Rf$ and $J=Rg$. Then $fg\in R(1-e)$ and so $efg=0$. But $ef$ is an idempotent and $e(ef)=ef$. The primitivity of $e$ implies $ef=0$ or $e=ef$. Likewise, $eg=0$ or $e=eg$. If $e=ef$ then $eg=efg=0$. Thus $ef=0$ or $eg=0$. If $ef=0$ then $f=(1-e)f$ so $Rf\subseteq R(1-e)$. If $eg=0$ then $Rg\subseteq R(1-e)$. Therefore, $R(1-e)$ is a purely-prime ideal of $R$.  
\end{proof}

\begin{corollary}\label{Coro 1} For a Noetherian ring $R$ we have: \\
$\mathbf{(i)}$ $R$ has finitely many pure ideals and they are exactly of the form $Re$ where $e$ is an idempotent of $R$. In particular, every purely-prime ideal of $R$ is purely-maximal.\\
$\mathbf{(ii)}$ The purely-prime ideals of $R$ are exactly of the form $R(1-e)$ where $e$ is a primitive idempotent of $R$. 
\end{corollary}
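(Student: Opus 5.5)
The plan is to deduce the corollary from Lemma \ref{Theorem 2}. The only extra input needed is the finiteness of the set of pure ideals in a Noetherian ring. In a Noetherian ring every ideal is finitely generated; in particular every purely-maximal ideal is. So the hypothesis of Lemma \ref{Theorem 2} holds, and parts (i) (except the finiteness claim) and (ii) follow at once. That is, every pure ideal has the form $Re$ with $e$ idempotent, every purely-prime ideal is purely-maximal, and the purely-prime ideals are exactly the ideals $R(1-e)$ with $e$ a primitive idempotent. One can also see directly that every pure ideal, being finitely generated and idempotent, is generated by an idempotent, via the Nakayama-type argument recalled earlier.

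It remains to show that $R$ has only finitely many pure ideals. Since the map $e\mapsto Re$ from idempotents to pure ideals is surjective, it suffices to show that $R$ has only finitely many idempotents.

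\begin{itemize}
\item A Noetherian ring has finitely many minimal primes $\mathfrak{p}_1,\dots,\mathfrak{p}_n$, so $\operatorname{Spec} R$ has finitely many irreducible components.
\item Each idempotent $e$ determines a clopen subset $V(1-e)$ of $\operatorname{Spec}R$, and $e\mapsto V(1-e)$ is injective. Indeed, if $V(1-e)=V(1-e')$ then $e-e'$ lies in every prime, so it is nilpotent. A nilpotent difference of idempotents is zero, since $(e-e')^3=e-e'$.
\item A clopen set is a union of connected components. Each irreducible component is connected, so it lies entirely inside or entirely outside any clopen set. Hence a clopen set is determined by which of the $n$ irreducible components it contains, giving at most $2^n$ clopen sets.
\end{itemize}

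Therefore $R$ has at most $2^n$ idempotents, and hence finitely many pure ideals.

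An alternative avoids topology. Choose a pure ideal $Re$ maximal among those for which $R(1-e)$ fails to decompose into finitely many connected pieces; Noetherianity makes this possible. Splitting then decomposes $R\cong\prod_{i=1}^{k}R_i$ with each $R_i$ having only the trivial idempotents $0,1$. The idempotents of $R$ are then the $2^k$ tuples of $0$'s and $1$'s.

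The main obstacle is this finiteness step, since everything else is immediate from Lemma \ref{Theorem 2}. I would present the minimal-primes argument, checking carefully that a clopen set cannot split an irreducible component.
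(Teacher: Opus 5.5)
Your proposal is correct and follows the paper's route. You apply Lemma \ref{Theorem 2}, whose hypothesis holds because every ideal of a Noetherian ring is finitely generated, and you get finiteness of the pure ideals from the finitely many minimal primes forcing finitely many idempotents; your clopen-set argument giving at most $2^n$ idempotents simply spells out the step the paper leaves implicit.
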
 
 
\begin{proof} This follows from Lemma \ref{Theorem 2} with considering the fact that every Noetherian ring has finitely many minimal primes, so it has finitely many idempotents. 
\end{proof}

The above two results show that to find a purely-prime ideal that is not purely-maximal, we need to investigate it in the rings having at least a pure ideal that is not finitely generated. On the other hand, if a ring has an infinitely generated pure ideal, then it does not necessarily have the above property (having a purely-prime ideal that is not purely-maximal). 
For instance, in the proof of Lemma \ref{Lemma 1} we observed there are pure ideals which are not necessarily finitely generated. As a specific example, let $k$ be a field and consider the infinite direct product ring $R=\prod\limits_{i\geqslant1}k$. Then the direct sum ideal $I=\bigoplus\limits_{i\geqslant1}k$ of $R$ is a pure ideal which is not finitely generated. However, $R$ is a von-Neumann regular ring, so every purely-prime ideal of $R$ is purely-maximal (see Lemma \ref{Lemma 1}).  

This makes it much more difficult to find a ring with the above property (having a purely-prime ideal that is not purely-maximal). However, the following result provides that desired example: 

\begin{theorem}\label{Theorem 1} There exists a commutative ring having a purely-prime ideal which is not purely-maximal.
\end{theorem}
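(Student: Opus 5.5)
The plan is to reduce the statement to a property of the zero ideal. It suffices to produce a ring $S$ in which the zero ideal is purely-prime but not purely-maximal; then $I=0$ in $S$ proves Theorem \ref{Theorem 1}. Since $JJ'=J\cap J'$ for pure ideals, the zero ideal is purely-prime exactly when \emph{any two nonzero pure ideals of $S$ have nonzero intersection}. It fails to be purely-maximal exactly when $S$ has a nonzero proper pure ideal. So the target is a ring with a nonzero proper pure ideal whose nonzero pure ideals pairwise intersect nontrivially.

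The earlier results constrain the search. The pure ideals cannot all be generated by idempotents, since $Re\cap R(1-e)=0$ (compare Lemma \ref{Lemma 1}). The purely-maximal ideals cannot all be finitely generated (Lemma \ref{Theorem 2}). Also $S$ cannot have a nil prime $p$ as its nilradical. Indeed, a pure ideal inside $p$ is zero, because $a=ab$ with $b$ nilpotent gives $a=ab^n=0$. And a pure $J\not\subseteq p$ maps to a pure, hence unit, ideal in the domain $S/p$, so $J+p=S$, which forces $J=S$. So $S$ needs infinitely many "nested" non-idempotent generators. A natural candidate is
\[ S=k[s_1,s_2,\dots]\big/\bigl(s_n(1-s_{n+1}) : n\geqslant 1\bigr), \]
together with variants where further generators $t_n$ obey $t_n(1-t_{n+1})=0$ and $t_n(1-s_1)=0$. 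By Remark \ref{Remark 1}, the ideal $J=(s_n : n\geqslant1)$ is pure. It is proper because $s_n\mapsto 1$ gives a ring map $S\to k$ with $J\mapsto k$, while $s_n\mapsto 0$ kills $J$; one of these two evaluations must show $J\neq S$.

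The key steps, in order, are as follows.
\begin{enumerate}
\item Describe $S$ concretely, for instance as a direct limit of the finitely presented rings $S_m=k[s_1,\dots,s_m]/(s_n(1-s_{n+1}))$. Then determine $\operatorname{Spec}S$ together with its localizations at primes.
\item Classify the pure ideals via the criterion that a pure ideal $K$ is determined by the generization-stable closed set $V(K)$, with $K_{\mathfrak p}\in\{0,S_{\mathfrak p}\}$ for every prime $\mathfrak p$.
\item Verify that the nonzero pure ideals form a chain, or at least a set in which any two members meet nontrivially. If the pure ideals form a chain $0\subsetneq J\subsetneq S$ (or a longer chain), then every proper pure ideal is meet-irreducible, so $0$ is purely-prime while $J$ witnesses that $0$ is not purely-maximal.
\end{enumerate}

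The main obstacle is steps 2--3: controlling \emph{all} pure ideals of an infinitely generated ring, not just those produced by Remark \ref{Remark 1}. I would first try to show that every element $a$ with $a(1-b)=0$ lies in a fixed "germ" ideal determined by the tail behaviour of the $s_n$. The tool for this would be that in each $S_m$ the element $s_1\cdots s_m$ behaves like a nested chain of supports. The comparison of $V(K)$ with the sets $V(J)$ and $V(0)$ should then give the chain property. If the two-generator variant with the $t_n$ turns out to be needed, I would apply the same analysis to $R/I$ with $I=(t_n)$, using that $KK'\subseteq I$ reduces to a product of pure ideals being zero in $R/I$.
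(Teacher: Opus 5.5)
Your reduction is sound: since $KK'=K\cap K'$ for pure $K$, the zero ideal is purely-prime iff nonzero pure ideals meet nontrivially. Your candidate is also the right ring. The relation $s_n=s_ns_{n+1}$ already gives $s_i=s_is_j$ for all $i<j$, so your $S$ is exactly the paper's $R$ and your $J$ is its $I$; the $t_n$-variants are unnecessary.

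The gap is that the proposal stops where the proof begins. Remark~\ref{Remark 1} and the two evaluations give only the easy half, a nonzero proper pure ideal. Steps 1--3 --- describing $\operatorname{Spec}S$ and showing that any two nonzero pure ideals have nonzero product --- are the entire content of the theorem, and they are only announced. The suggested mechanism is not an argument: it appeals to a ``germ ideal'' and to $s_1\cdots s_m$ as a ``nested chain of supports'', although $s_1\cdots s_m=s_1$ in $S_m$. The hedge about extra generators suggests the candidate was never actually tested.

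What is missing is a device that controls \emph{every} pure ideal by tying the pieces of $S$ together. In the paper this device has three parts:
\begin{itemize}
\item the normal form $f=c+\sum_{i\le N}p_i(x_i)$ with $p_i\in tk[t]$;
\item the point-separating family $\epsilon:R\to k$, $\pi_n:R\to k[t]$, with $x_i\mapsto 0,t,1$ according as $i<n$, $i=n$, $i>n$;
\item the identity $\pi_n(f)(0)=\pi_{n+1}(f)(1)$.
\end{itemize}
The identity forces the pure ideals $\pi_n(L)$ of the domain $k[t]$ to be all $0$ or all $k[t]$. Nonzero $L$ then forces all of them to be $k[t]$, and $\epsilon(L)\in\{0,k\}$ gives $L=I$ or $L=R$.

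Your own topological plan can be finished cheaply, and it would be a genuinely different and somewhat shorter route. It needs neither the normal form, nor the full classification, nor your Step 2 injectivity criterion.
\begin{itemize}
\item For a prime $\mathfrak p$, either all $s_i\in\mathfrak p$, so $\mathfrak p\supseteq J=\ker\epsilon$. Or there is a least $n$ with $s_n\notin\mathfrak p$; then $s_i\in\mathfrak p$ for $i<n$ and $1-s_j\in\mathfrak p$ for $j>n$, i.e.\ $\mathfrak p\supseteq\ker\pi_n$.
\item The point $t=0$ of the $n$-th line coincides with the point $t=1$ of the $(n+1)$-st. Both are the kernel of $s_i\mapsto 0$ for $i\le n$, $s_j\mapsto 1$ for $j>n$.
\item For pure $K$, the set $V(K)$ is closed and stable under generization. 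Indeed, for $a\in K$ write $a=ab$ with $b\in K$; if $K\subseteq\mathfrak p$ and $\mathfrak p'\subseteq\mathfrak p$, then $1-b\notin\mathfrak p'$, so $a\in\mathfrak p'$.
\item Hence if $V(K)$ meets the $n$-th line, it contains $\ker\pi_n$, hence the whole line, hence $\ker\pi_{n\pm1}$, hence every $\ker\pi_m$.
\item Since $\pi_m(f)=\epsilon(f)$ once $m$ exceeds the indices of the variables occurring in $f$, we get $K\subseteq\bigcap_m\ker\pi_m\subseteq\ker\epsilon$. So $K$ lies in every prime, is nil, and is therefore $0$.
\item Thus every nonzero pure $K$ has $V(K)\subseteq V(J)$. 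Since $\pi_1(s_1)=t$, we have $\ker\pi_1\notin V(J)$. So for nonzero pure $K,K'$, $\ker\pi_1\notin V(K)\cup V(K')=V(KK')$, whence $KK'\neq0$.
\end{itemize}

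Until an argument of this kind, or the paper's, is supplied, the proposal does not prove the theorem.
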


\begin{proof} Let $k$ be an integral domain and consider the polynomial ring $k[X_{1},X_{2},X_{3},\ldots]$ modulo the ideal $J$ which is generated by all elements of the form $X_{i}(1-X_{j})$ with $i<j$. We denote this ring by $R:=k[X_{1},X_{2},X_{3},\ldots]/J$ and we write $x_i$ for the image of $X_i$ in $R$ (under the natural ring map). Then in $R$ we have $x_{i}=x_{i}x_{j}$ for all $i<j$. So by Remark \ref{Remark 1}, $I:=(x_{1}, x_{2}, x_{3}, \ldots)$ is a pure ideal of $R$.  \\
We will show that the pure ideals of $R$ are exactly the zero ideal, the whole ring $R$ and $I$, and consequently, the zero ideal is purely-prime but not purely-maximal. \\
To achieve this goal, we first need to make some preparations. By the universal property of polynomial rings, we have a surjective morphism of $k$-algebras $k[X_{1},X_{2},X_{3},\ldots]\rightarrow k$ which maps each $X_{i}$ to $0$. Since, $J$ is contained in the kernel of this map, we get a surjective morphism of $k$-algebras $\epsilon:R\rightarrow k$ such that $\epsilon(x_{i})=0$ for all $i$.  Similarly to the above, for each $n\geqslant1$ we get a surjective morphism of $k$-algebras $\pi_{n}:R\rightarrow k[t]$ such that: 
\[
\pi_{n}(x_{i}) = 
\begin{cases}
    0, & \text{if } i< n, \\
    t,  & \text{if } i=n, \\
    1, & \text{if } i>n.
\end{cases}
\] 
It can be seen that the (ring) maps $\epsilon, \pi_{1}, \pi_{2},\pi_{3},\ldots$ separate the points of $R$, i.e., if for some $f\in R$ we have $\epsilon(f)=0$ and $\pi_{n}(f)=0$ for all $n$, then $f=0$. 
In fact, since $x_{i}x_{j}=x_{i}$ for all $i<j$, every monomial in $R$ involving several variables reduces to a power of the variable having the smallest index. It follows that every $f\in R$ can be written in the form $f=c+\sum\limits_{i=1}^{N}p_{i}(x_{i})$ where $c\in k$ and each polynomial $p_{i}(t)\in tk[t]$ for all $i$. Then $c=\epsilon(f)=0$. If $f$ is nonzero, then we may choose largest $N$ with $p_{N}(t)\neq0$. We have $0=\pi_{N}(f)=p_{N}(t)$ which is a contradiction. \\
Note that if $f\in R$ then for each $1\leqslant n\leqslant N$ we have $\pi_{n}(f)=c+p_{n}(t)+\sum\limits_{i>n}p_{i}(1)$. For  $n>N$ we have $\pi_{n}(f)=\epsilon(f)=c$. Thus $\pi_{n}(f)(0)=\pi_{n+1}(f)(1)=
c+\sum\limits_{i>n}p_{i}(1)$ for all $n$.\\
Now let $L$ be a nonzero pure ideal of $R$. For each $n$, the image $\pi_{n}(L)$ is a pure ideal of $k[t]$. Since $k[t]$ is an integral domain, its only pure ideals are the zero ideal and $k[t]$, therefore $\pi_{n}(L)\in\{0, k[t]\}$ for all $n$. But these images have the same status, i.e., all images are $0$, or all images are $k[t]$. Indeed, suppose $\pi_{n}(L)= k[t]$ for some $n$. 
Choose $f\in L$ with $\pi_{n}(f)=1$. Then $\pi_{n+1}(f)(1)=\pi_{n}(f)(0)=1$. This shows that $\pi_{n+1}(f)\neq0$ and so $\pi_{n+1}(L)=k[t]$. Similarly, if $n>1$ then $\pi_{n-1}(f)(0)=\pi_{n}(f)(1)=1$, so $\pi_{n-1}(L)=k[t]$. Thus, if one image is nonzero, then all are $k[t]$. But $L\neq0$, so  $\pi_{n}(L)=k[t]$ for all $n$. Indeed, to see this, it suffices to show that $\pi_{n}(L)\neq0$ for some $n$. Suppose $\pi_{n}(L)=0$ for all $n$. We may choose a nonzero $f\in L$. Then by the separating property, $\epsilon(f)\neq0$. But for $n>N$ we have $0=\pi_{n}(f)=\epsilon(f)\neq0$ which is a contradiction. Therefore, $\pi_{n}(L)=k[t]$ for all $n$.  \\
The image $\epsilon(L)$ is a pure ideal of the integral domain $k$, so it is either $0$ or $k$. Assume first that $\epsilon(L)=0$. Then $L\subseteq\Ker(\epsilon)=I$. Take $f\in I$. Then in the presentation of $f$, we have $c=0$ and so $\pi_{n}(f)=0$ for all $n>N$. For each $1\leqslant j \leqslant N$, since $\pi_{j}(L)=k[t]$, we may choose some $g_{j}\in L$ with $\pi_{j}(g_{j})=1$. Then  $g:=1-\prod\limits_{j=1}^{N}(1-g_{j})\in L$ and $\pi_{i}(g)=1$ for all $1\leqslant i \leqslant N$. Therefore $\pi_{n}(f-fg)=0$ for all $n$. Indeed, for $n\leqslant N$, $\pi_{n}(g)=1$, and for $n>N$, $\pi_{n}(f)=0$. Also $\epsilon(f-fg)=0$. Then by the separating property of these maps, we get that $f=fg\in L$. Therefore $L=I$. \\
Now assume that $\epsilon(L)=k$. Choose some $f\in L$ with $\epsilon(f)=1$. We know that $\pi_{n}(f)=\epsilon(f)=1$ for all $n>N$. For each $1\leqslant j \leqslant N$, choose $g_{j}\in L$ with $\pi_{j}(g_{j})=1$. Then $h:=1-(1-f)\prod\limits_{j=1}^{N}(1-g_{j})\in L$ and $\pi_{n}(h)=1$ for all $n$. Also, $\epsilon(h)=1$. The separating property gives $h=1$. Thus $L=R$. 
Therefore, the pure ideals of $R$ are exactly: 
$$\{0\}, I, R.$$
Note that the ideal $I$ is nonzero, because $\pi_{1}(x_{1})=t\neq0$. Thus the zero ideal is  purely-prime but not purely-maximal ($I$ is indeed the only purely-maximal ideal of $R$). This completes the proof. 
\end{proof}

In analogy with the classical notion of Krull dimension, we introduce a new invariant, called the pure dimension, by considering chains of purely-prime ideals.  More precisely, for a commutative ring $R$, the Krull dimension $\dim(R)$ is defined as the supremum of the lengths of all strict chains of prime ideals in $R$. Similarly, the pure dimension of $R$, denoted by $\pdim(R)$, is defined as the supremum of the lengths of all strict chains of purely-prime ideals in $R$. 

If $\dim(R)=0$, then by Lemma \ref{Lemma 1}, $\pdim(R)=0$.  In many respects, the pure dimension is considerably less subtle than the Krull dimension. Indeed, a large class of rings-including Gelfand rings, reduced mp-rings, and Noetherian rings-have pure dimension zero. However, for the ring $R$ in Theorem \ref{Theorem 1}, we have $\pdim(R)=1\leqslant\dim(R)$. These observations naturally lead to the following problems: 

\begin{conjecture} For every commutative ring $R$, $$\pdim(R)\leqslant\dim(R).$$ 
\end{conjecture} 

Regarding the above conjecture, we even expect that, for many rings $R$, the invariant $\pdim(R)$ is substantially smaller than $\dim(R)$. The second problem reads as follows:

\begin{conjecture} The pure dimension of every commutative ring is finite.
\end{conjecture}

We believe that these conjectures are very likely to be true. \\

\textbf{Acknowledgments.} The author would like to express his deep gratitude to Professor Brian Conrad, who kindly consulted an advanced AI model, ChatGPT, regarding the question considered in this paper.

\end{document}